\theoremstyle{plain}
\newtheorem{thm}{Theorem}[section]
\newtheorem{cor}[thm]{Corollary}
\newtheorem{prop}[thm]{Proposition}
\theoremstyle{definition}
\newtheorem{ex}[thm]{Example}
\theoremstyle{remark}
\newtheorem{rem}{Remark}
\def\cocoa{{\hbox{\rm C\kern-.13em o\kern-.07em C\kern-.13em o\kern-.15em A}}}
\def\Tor#1#2#3#4{{\rm Tor}_{#1}^{#2}\,({#3},{#4})}
\def\reg{{\rm reg}}
\def\ZZ{{\mathbb Z}}
\begin{document}

\title{candidates for non-zero betti numbers of monomial ideals}
\author[a. a. yazdan pour]{{ali akbar} {yazdan pour}}
\address{Department of Mathematics\\ Institute for Advanced Studies in Basic Sciences (IASBS)\\ P.O.BOX: 45195-1159 \\ Zanjan, Iran}
\email{yazdan@iasbs.ac.ir}
\keywords{Betti numbers, syzygy, Fr\"oberg's Theorem, Green-Lazarsfel index.}
\subjclass[2010]{Primary 13D02, 13D05; Secondary 05E40.}

\begin{abstract}
Let $I$ be a monomial ideal in the polynomial ring $S$ generated by elements of degree at most $d$. In this paper, it is shown that, if the $i$-th syzygy of $I$ has no element of degrees $j, \ldots, j+(d-1)$ (where $j \geq i+d$), then $(i+1)$-syzygy of $I$ does not have any element of degree $j+d$. Then we give several applications of this result, including an alternative proof for Green-Lazarsfeld index of the edge ideals of graphs as well as an alternative proof for Fr\"oberg's theorem on classification of square-free monomial ideals generated in degree two with linear resolution. Among all, we describe the possible indices $i, j$ for which $I$ may have non-zero Betti numbers $\beta_{i,j}$.
\end{abstract}
\maketitle

\section*{introduction}
Monomial ideals are probably the most important objects in combinatorial commutative algebra and their graded minimal free resolution is a wide area of research in this subject. It is known that, the graded Betti numbers of a monomial ideal and its polarization are the same. Polarization is a technique to transform a monomial ideal to a square-free monomial ideal. On the other hand, the Stanley-Reisner theory, corresponds a simplicial complex to a given square-free monomial ideal and Hochster formula \cite{Hochster} enables us to find the Betti numbers of square-free monomial ideals in purely combinatorial method; though determining the Betti numbers of these ideals is not as easy as using only this formula. In fact, computational methods in commutative algebra and computer algebra systems such as \textsc{Singular} \cite{Singular} and \cocoa~\cite{CoCoA} use the technique of the Gr\"obner basis to find the Betti numbers of an ideal and not the Hochster formula. So it seems that the combinatorics of the monomial ideals is still mysterious. 

On the other hand, many numerical invariants of a graded ideal (such as, projective dimension, Castelnuovo-Mumford regularity, etc), focus on indices for which we have a non-zero Betti numbers, since determining the exact value of Betti numbers seems not to be accessible in many cases. This paper is intended to describe a (sharp) bound for an interval for which, we should search for the non-zero Betti numbers among it. Indeed, we show that, for a given monomial ideal $I$ generated in degree at most $d$, if the $i$-th syzygy of $I$ contains no elements of degrees $j, j+1, \ldots, j+(d-1)$ (where $j \geq i+d$), then in the next syzygy, there is no element of degree $j+d$ (Theorem~\ref{Growth of Beti numbers-Cor1}). Among all other interesting corollaries, this enables us to describe the possible shape of the Betti diagram of a monomial ideal. A similar result in the case of edge ideal of a graph, can be found in \cite{Oscar} by O.~Fern{\'a}ndez-Ramos and P.~Gimenez. This paper is organized as follows: In Section~\ref{Preliminaries}, we recall some basic notions and prerequisites on commutative algebra and combinatorics. Section~\ref{Section main Theorem} contains the main theorem of this paper. To prove the main theorem, first we obtain the result in the case of square-free monomial ideals and then, we use the technique of polarization, to obtain the result in the case of monomial ideals. Our method here, is to observe on the vanishing of reduced homology in certain degrees. The last section is devoted to some applications of the main theorem. In this section, we state several applications of the main theorem and we pursue with generalizations of known results on this subject.

Recently, the author found that Theorem~\ref{Growth of Beti numbers-Cor1} has been independently proved in \cite{Varbaro} to justify their algorithm for computing Betti numbers and Corollary~\ref{Growth of Beti numbers-Cor2}(i) appeared in \cite{Herzog-Srinivasan} as a partial answer to subadditivity problem. Independently the author proved these results and presented them in \cite{Y12}.

\section{preliminaries} \label{Preliminaries}
In this section, we recall basic notations and prerequisites of this paper which will be used later. This section is divided into three parts. In the first part, the notion of simplicial complexes and Mayer-Vietoris sequence will be described. Then, we review quickly the definition of Betti numbers and regularity. Finally, we recall basic notations in graph theory; the Fr\"oberg's theorem is also stated in this part. Throughout, $S=K[x_1, \ldots, x_n]$ denotes the polynomial ring over a field $K$ with the standard grading (i.e. $\deg(x_i)=1$).

\subsection*{Simplicial complexes}
A \textit{simplicial complex} $\Delta$ over a set of vertices $V=\{ v_{1}, \ldots, v_{n} \}$, is a collection of subsets of $V$, with the property that:
\begin{itemize}
\item[(a)] $\{ v_{i} \} \in \Delta $, for all $i$;
\item[(b)] if $F\in \Delta$, then all subsets of $F$ are also in $\Delta$ (including the empty set).
\end{itemize}
An element of $\Delta$ is called a \textit{face }of $\Delta$ while a \emph{non-face} of $\Delta$ is a subset $F$ of $V$ with $F \notin \Delta$. We denote by $\mathcal{N}(\Delta)$, the set of all minimal non-faces of $\Delta$. The maximal faces of $\Delta$ with respect to inclusion are called \textit{facets} of $\Delta$.

Let $\mathcal{F}(\Delta) =\{F_{1}, \ldots, F_{q}\}$ be the facet set of $\Delta$. It is clear that $\mathcal{F}(\Delta)$ determines $\Delta$ completely and so we write $\Delta = \langle F_{1}, \ldots, F_{q} \rangle$. A simplicial complex $\Gamma$ is called a \textit{subcomplex} of $\Delta$, if $\Gamma \subset \Delta$.

Given a simplicial complex $\Delta$ over $n$ vertices labeled $v_{1}, \ldots, v_{n}$. For $F \subset \{v_{1}, \ldots, v_{n} \}$, we set:
\begin{equation*}
\textbf{x}_F=\prod\limits_{v_i \in F}{x_i}.
\end{equation*}
with $\textbf{x}_\varnothing = 1$.

The \emph{non-face ideal} or the \emph{Stanley-Reisner ideal} of $\Delta$, denoted by $I_\Delta$, is the ideal of $S$ generated by square-free monomials $\textbf{x}_F$ where $F \in \mathcal{N}(\Delta)$. Also we call $K[\Delta]:=S/I_\Delta$ the \emph{Stanley-Reisner ring} of $\Delta$.

Let $\tilde{H}_i \left( \Delta; K \right)$ denotes the reduced homology of a simplicial complex $\Delta$ with the coefficients in the field $K$. If $\Delta$ is a simplicial complex and $\Delta_1$ and $\Delta_2$ are subcomplexes of $\Delta$, then there is an exact sequence:
\begin{align} \label{Reduced Mayer-Vietoris sequence}
\cdots \to \tilde{H}_j(\Delta_1 \cap \Delta_2 ; K) \to \tilde{H}_j(\Delta_1; K) \oplus \tilde{H}_j(\Delta_2; K) \to  \tilde{H}_j(\Delta_1 \cup \Delta_2 ; K) \to \nonumber \\
\to  \tilde{H}_{j-1}(\Delta_1 \cap \Delta_2; K) \to \cdots
\end{align}
with all coefficients in $K$, called the \textit{reduced Mayer-Vietoris sequence} of $\Delta_1$ and $\Delta_2$ (see \cite[Proposition 5.1.8]{HerzogHibi} for more details).

\subsection*{Betti numbers and Hochester formula}
Let $M = \oplus_{i\in \ZZ}M_i$ be a finitely generated graded $S$-module and
$$
 \cdots \to F_i \stackrel{\varphi_i}{\longrightarrow} F_{i-1} \to \cdots \to F_1 \stackrel{\varphi_1}{\longrightarrow} F_0 \stackrel{\varphi_0}{\longrightarrow} M \stackrel{\varphi_{-1}}{\longrightarrow} 0
$$
a graded minimal free resolution of $M$ with $F_i = \oplus_j S(-j)^{\beta^K_{i,j}(M)}$ for all $i$. The graded $S$-module $\mathrm{Syz}_i (M) :=\mathrm{Ker} (\varphi_{i-1})$ ($i \geq 0$) is called the $i$-th syzygy module of $M$ and the numbers $\beta_{i,j}^K(M) = \dim_K \mbox{Tor}^S_i(K,M)_j$ are called the \textit{graded Betti numbers} of $M$. Also, the \textit{projective dimension} of $M$,  $\mbox{projdim}(M)$, is defined to be the number
$$
\mbox{projdim}(M) = \sup\{i \colon \quad \mbox{Tor}^S_i(K,M) \neq 0\}.
$$
Note that $\beta^K_{i,j}(M)$ counts the elements of degree $j$ in a minimal generator of $i$-th syzygy.

For every $i \in \mathbb{N} \cup \{0\}$, one defines:
$$t^S_i(M) = \max \{j \colon \quad \beta^K_{i,j}(M) \neq 0 \}$$
and $t^S_i(M)= - \infty$, if it happens that $\Tor{i}{S}{K}{M}= 0$. The \textit{Castelnuovo-Mumford regularity} (or simply \textit{regularity}) of $M$, $\reg(M)$, is given by:
$$\reg (M)= \sup \{t^S_i(M)-i \colon \quad i \in \ZZ \}.$$
It is worth to say that, by graded version of Hilbert's syzygy theorem \cite[Corollary 19.7]{Eisenbud}, every finitely generated graded $S$-module has a minimal free resolution of length at most $n$, the number of variables of $S$. Hence its regularity and projective dimension are finite.

The \textit{initial degree}, ${\rm indeg} (M)$, is given by:
$${\rm indeg}\, (M)=\inf \{i \colon \quad M_i \neq 0 \}.$$
A finitely generated graded $S$-module $M$ has a \textit{$d$-linear resolution}, if its regularity is equal to $d = {\rm indeg} (M)$.

There exists a beautiful formula due to Hochster, that describes the Betti numbers of a square-free monomial ideal $I$ in terms of the dimension of reduced homologies of $\Delta$, when $I = I_\Delta$.
\begin{thm}[{Hochster formula \cite[Theorem 5.1]{Hochster}}] \label{Hochster Formula}
Let $\Delta$ be a simplicial complex on $[n] = \{1, \ldots, n\}$, $K$ a field. Then,
\begin{align*}
\beta^K_{i,j}(I_\Delta) = \sum\limits_{\substack{W \subset [n] \\ |W|=j}}{\dim_K \tilde{H}_{j-i-2}(\Delta_W; K)},
\end{align*}
where $\Delta_W$ is the simplicial complex on the vertex set $W$, whose faces are $F \in \Delta$  with $F \subseteq W$.
\end{thm}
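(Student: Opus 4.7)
The plan is to pass to the finer $\ZZ^n$-grading on $S$ and compute the multigraded Tor modules through the Koszul complex. Since $I_\Delta$ is $\ZZ^n$-graded, one has
\[ \beta_{i,j}^K(I_\Delta) = \sum_{\mathbf{a} \in \ZZ^n,\ |\mathbf{a}|=j} \dim_K \Tor{i}{S}{K}{I_\Delta}_{\mathbf{a}}. \]
First I would verify that, since $I_\Delta$ is generated by squarefree monomials, the piece $\Tor{i}{S}{K}{I_\Delta}_{\mathbf{a}}$ vanishes whenever $\mathbf{a} \notin \{0,1\}^n$; indeed, the Taylor complex on the squarefree generators is an $S$-free resolution of $I_\Delta$ whose shifts all lie in squarefree multidegrees. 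Hence the multidegree sum collapses to one over subsets $W \subseteq [n]$ with $|W|=j$, matching the shape of the right-hand side.

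The main step is to identify, for each such $W$, the $W$-graded piece of Tor with the reduced homology of $\Delta_W$. For this I would compute $\Tor{i}{S}{K}{S/I_\Delta}$ as the homology of $\mathcal{K}_\bullet \otimes_S (S/I_\Delta)$, where $\mathcal{K}_\bullet = \mathcal{K}_\bullet(x_1,\ldots,x_n; S)$ is the Koszul resolution of $K$ over $S$. A direct inspection of multidegrees shows that a $K$-basis of $(\mathcal{K}_i \otimes_S S/I_\Delta)_{W}$ is indexed by pairs $(\sigma, F)$ with $\sigma \sqcup F = W$, $|\sigma|=i$, and $F \in \Delta$, or equivalently by faces $F \in \Delta_W$ of size $|W|-i$. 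After the appropriate reindexing, the Koszul differential matches (up to a uniform sign) the augmented reduced simplicial boundary of $\Delta_W$, so that
\[ \Tor{i}{S}{K}{S/I_\Delta}_{W} \;\cong\; \tilde{H}_{|W|-i-1}(\Delta_W; K). \]

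Finally, the short exact sequence $0 \to I_\Delta \to S \to S/I_\Delta \to 0$ yields $\beta_{i,j}^K(I_\Delta) = \beta_{i+1,j}^K(S/I_\Delta)$ for $i \geq 0$; combining this with the above isomorphism and summing over $W$ of size $j$ gives the claimed formula, the index shift turning $\tilde{H}_{j-i-1}$ into $\tilde{H}_{j-i-2}$. The hard part is the identification of differentials: one must check carefully that, under the combinatorial correspondence between Koszul basis elements in multidegree $W$ and faces of $\Delta_W$, the signs and indexing align so that the empty face contributes to $C_{-1}(\Delta_W) = K$ and cycles and boundaries in the two complexes correspond. Once this homological dictionary is in place, both the reduction to squarefree multidegrees and the index shift from $S/I_\Delta$ to $I_\Delta$ follow automatically.
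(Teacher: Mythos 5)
The paper offers no proof of this statement --- it is quoted directly from Hochster's article --- so your argument can only be judged on its own terms; it is the standard Koszul-complex derivation and is essentially correct. The reduction to squarefree multidegrees (via the Taylor complex, or directly by checking exactness of the Koszul strand in any multidegree $\mathbf{a}\notin\{0,1\}^n$), the identification of a $K$-basis of $(\mathcal{K}_i\otimes_S S/I_\Delta)_W$ with the faces $F\in\Delta_W$ of cardinality $|W|-i$, and the shift $\beta^K_{i,j}(I_\Delta)=\beta^K_{i+1,j}(S/I_\Delta)$ coming from $0\to I_\Delta\to S\to S/I_\Delta\to 0$ are all sound. The one point you should state more carefully is the "hard part" you flag: under the dictionary $e_\sigma\otimes \mathbf{x}_{W\setminus\sigma}\leftrightarrow F=W\setminus\sigma$, the Koszul differential sends $F$ to a signed sum of the faces $F\cup\{k\}$ with $k\in\sigma$, so it \emph{raises} the dimension of the face. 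The strand is therefore the augmented reduced \emph{cochain} complex of $\Delta_W$ (read backwards), not its chain complex, and what drops out is ${\rm Tor}^S_i(K,S/I_\Delta)_W\cong \tilde{H}^{\,|W|-i-1}(\Delta_W;K)$. Since $K$ is a field, $\dim_K\tilde{H}^{p}(\Delta_W;K)=\dim_K\tilde{H}_{p}(\Delta_W;K)$ by universal coefficients, so the formula as stated (with homology) follows; you should either invoke this duality explicitly or apply ${\rm Hom}_K(-,K)$ to the strand before matching it with the simplicial boundary. With that correction the proof is complete.
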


\subsection*{Graph and edge ideal}
A simple \textit{graph} $G = (V, E)$ consists of a finite set $V$ of \textit{vertices} and a collection $E$ of subsets of $V$, called \textit{edges}, such that every edge of $G$ is a pair $\left\{ u, v \right\}$ for some $u,v \in V$ with $u \neq v$. 

Given a graph $G$ on the vertex set $[n]=\{1, \ldots, n\}$, the \textit{edge ideal}, $I(G) \subset S$, is the square-free monomial ideal defined as follows:
$$I({G}) = \left( x_ix_j \colon \qquad \{i,j\} \in E(G) \right).$$
It is clear that, there exists a one to one correspondence between square-free monomial ideals generated in degree $2$ in $S$ and the edge ideals of graphs. Finding algebraic properties of a square-free monomial ideal generated in degree $2$ in terms of combinatorial properties of the associated graph, is a very active area of research in commutative algebra. One of the most beautiful result in this subject is the Fr\"oberg's theorem on characterization of square-free monomial ideals generated in degree $2$ with linear resolution. To state this theorem, let us first recall some elementary concepts in the graph theory.

For a  graph $G$, we denote by $V(G)$ and $E(G)$ the set of vertices and edges of $G$ respectively. A graph $K$ is \textit{complete} if $E(K)$ contains all $2$-subsets of $V(K)$. A graph $H$ is called a \textit{subgraph} of $G$ if $V':=V(H) \subseteq V(G)$ and $E':=E(H) \subseteq E(G)$. Furthermore, $H$ is called the \textit{induced subgraph} of $G$, if $E' = \left\{ \{u,v\} \in E(G) \colon \quad u,v \in V(H) \right\}$; in this case, we write $H=G_{V'}$. A subset $W$ of $V$ is called a \textit{clique} in $G$, if $G_W$ is a complete graph. The set of cliques of $G$, forms a simplicial complex, which is called \textit{clique complex} of $G$ and is denoted by $\Delta(G)$. One may easily check that $I_{\Delta(G)} = I \left( \bar{G} \right)$, where $\bar{G}$ is a graph with $V \left( \bar{G} \right) = V\left( {G} \right)$ and $E \left( \bar{G} \right) = \left\{ \{u,v\} \colon \quad u, v \in V(G) \text{ and } \{u,v\} \notin E(G) \right\}$.

A \textit{cycle} of length $q$ is a graph $C=(V, E)$ with $V=\{1, \ldots, q\}$ and
$$E=\big\{ \{i_1, i_2\}, \{i_2, i_3\}, \ldots, \{i_{q-1}, i_q\}, \{i_q, i_1 \} \big\},$$
where $i_j \neq i_k$, if $j \neq k$. An induced subgraph which is also a cycle is called an \textit{induced cycle}, and a graph $G$ is said to be \textit{chordal} if it has no induced cycle of length $>3$.

With the above definitions, we are now  ready to state Fr\"oberg's theorem. In Section~\ref{Applications}, we will recover this theorem by the results of this paper.

\begin{thm}[{Fr\"oberg's Theorem \cite{Froberg}}] \label{Froberg Theorem}
Let $G$ be a graph on the vertex set $[n]$ and $I=I(G)$ be the associated edge ideal of $G$. The ideal $I$ has a linear resolution if and only if $\bar{G}$ is a chordal graph.
\end{thm}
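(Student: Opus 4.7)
The plan is to prove the two directions separately, treating the edge ideal $I(G)$ as the Stanley--Reisner ideal $I_{\Delta(\bar G)}$ and leveraging Hochster's formula (Theorem \ref{Hochster Formula}). For any $W \subseteq [n]$, the identity $\Delta(\bar G)_W = \Delta(\bar G_W)$ lets Hochster read as $\beta_{i,j}(I(G)) = \sum_{|W|=j} \dim_K \tilde H_{j-i-2}(\Delta(\bar G_W); K)$. Hence $I(G)$ has a $2$-linear resolution if and only if $\tilde H_k(\Delta(\bar G_W); K) = 0$ for every $W \subseteq [n]$ and every $k \geq 1$, and my plan is to establish this homological vanishing exactly when $\bar G$ is chordal.

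For the forward direction (linear resolution implies $\bar G$ chordal), I argue by contrapositive. Suppose $\bar G$ contains an induced cycle $C_q$ of length $q \geq 4$ on some vertex set $W$. Since $C_q$ is triangle-free, its clique complex $\Delta(\bar G_W) = \Delta(C_q)$ coincides with $C_q$ viewed as a $1$-dimensional complex, hence is homotopy equivalent to $S^{1}$ and satisfies $\tilde H_1(\Delta(\bar G_W); K) \cong K$. Applied with $j = q$ and $i = q-3$, Hochster's formula yields $\beta_{q-3,\, q}(I(G)) \geq 1$, a non-linear Betti number contradicting the linearity hypothesis.

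For the backward direction ($\bar G$ chordal implies linear resolution), I induct on $|W|$ to show $\tilde H_k(\Delta(\bar G_W); K) = 0$ for every $k \geq 1$. Since chordality is inherited by induced subgraphs, it suffices to prove that for every chordal graph $H$, $\tilde H_k(\Delta(H); K) = 0$ for $k \geq 1$. By Dirac's theorem, $H$ admits a simplicial vertex $v$, i.e.\ one whose neighborhood $N_H(v)$ is a clique. Decompose $\Delta := \Delta(H)$ as $\Delta_1 \cup \Delta_2$, where $\Delta_1$ is the closed star of $v$ and $\Delta_2 := \Delta(H - v)$. Simpliciality of $v$ makes $\Delta_1$ the full simplex on $\{v\} \cup N_H(v)$, hence contractible, while $\Delta_1 \cap \Delta_2$ is the link of $v$, equal to the full simplex on $N_H(v)$, also contractible. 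Plugging these into the reduced Mayer-Vietoris sequence \eqref{Reduced Mayer-Vietoris sequence} collapses it to $\tilde H_k(\Delta) \cong \tilde H_k(\Delta_2)$ for $k \geq 1$, and the inductive hypothesis applied to $H - v$ finishes the argument.

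The main obstacle lies in the backward direction, which marries a structural fact about chordal graphs (existence of a simplicial vertex) with a topological reduction via Mayer-Vietoris; the forward direction, by contrast, reduces to a single explicit Hochster computation on an induced cycle. Note that this plan does not directly invoke the main growth theorem (Theorem \ref{Growth of Beti numbers-Cor1}); I expect the author may instead route both directions through it, propagating linearity inductively from $G - v$ to $G$ for a suitable simplicial vertex $v$ of $\bar G$, so as to showcase the growth theorem as the unified engine behind the paper's applications.
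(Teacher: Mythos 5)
Your proof is correct, but it takes a genuinely different route from the paper's. The paper derives Fr\"oberg's theorem (Example~\ref{example for froberg}) as a corollary of Proposition~\ref{vanishing of reduced homology}, which in turn rests on the main growth theorem (Theorem~\ref{Growth of Beti numbers-Cor1}): for $d=2$ that proposition reduces linearity of the edge ideal to the vanishing of $\tilde{H}_1(\Delta_W;K)$ \emph{alone} for all $W$ --- vanishing of $\tilde H_k$ for $k\geq 2$ then comes for free from the growth theorem --- and the remaining graph-theoretic content is discharged by quoting the fact that $\tilde{H}_1$ of a clique complex is generated by the classes of induced cycles of length $>3$. You instead prove the stronger statement that \emph{all} reduced homology $\tilde{H}_k$, $k \geq 1$, of the clique complex of a chordal graph vanishes, via Dirac's simplicial-vertex theorem and a Mayer--Vietoris induction on the closed star of a simplicial vertex. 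This makes the argument self-contained (no appeal to Theorem~\ref{Growth of Beti numbers-Cor1}, nor to the ``generated by induced cycles'' fact, which the paper uses without proof or citation), at the cost of importing Dirac's theorem and of not showcasing the main theorem, which is the paper's stated purpose in Section~\ref{Applications}; your forward direction via a single Hochster computation on an induced cycle is essentially the same as what the paper's Theorem~\ref{Eisenbud theorem} does. The only point worth a footnote in your argument is the degenerate case of an isolated simplicial vertex $v$, where $\Delta_1 \cap \Delta_2 = \{\varnothing\}$ is not contractible; since that only perturbs the Mayer--Vietoris sequence in homological degrees $\leq 0$, the isomorphism $\tilde{H}_k(\Delta) \cong \tilde{H}_k\bigl(\Delta(H-v)\bigr)$ for $k \geq 1$ survives, so the induction goes through.
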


\section{On the growth of the range of non-zero Betti numbers} \label{Section main Theorem}

As it is mentioned in the introduction of this paper, some numerical invariants of modules depend only on indices for which we have a non-zero Betti numbers. So it is of great importance to find a method to bound the intervals for which the Betti numbers may not be zero. The main theorem of this section provides a sharp bound for this conclusion. In fact, our aim in this section is to prove the following theorem. We will see several applications of this theorem in the next section.

In the following, for a monomial ideal $I \subset S$, we denote by $\mathcal{G} \left( I \right)$, the unique set of monomial minimal generators of $I$.
\begin{thm}[Main Theorem] \label{Growth of Beti numbers-Cor1}
Let $I$ be a monomial ideal in the polynomial ring $S$ and $d := \max \{ \deg (u) \colon \; u \in \mathcal{G}(I) \}$. If $i\geq 0$ and $j \geq i+d$ be non-negative integers such that,
	$$\beta^K_{i,j}(I) = \beta^K_{i,j+1}(I) = \cdots = \beta^K_{i,j+(d-1)}(I) = 0,$$
then, $\beta^K_{i+1,j+d}(I) = 0$.
\end{thm}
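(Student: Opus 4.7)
The strategy is two-fold: first reduce to the square-free case via polarization, and then translate the problem into vanishings of reduced simplicial homology using Hochster's formula in order to apply a Mayer--Vietoris argument.

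\textbf{Reduction and reformulation.} Polarization associates to any monomial ideal $I \subset S$ a square-free monomial ideal $I^{\mathrm{pol}}$ in an enlarged polynomial ring, preserving both the graded Betti numbers and the degrees of the minimal monomial generators. Hence both the hypothesis and the conclusion are invariant under polarization, so I may assume $I = I_\Delta$ with $\Delta$ a simplicial complex on $[n]$ whose minimal non-faces all have cardinality at most $d$. By Theorem~\ref{Hochster Formula}, the hypothesis is then equivalent to
\[
\tilde H_{(j+k)-i-2}(\Delta_{W'}; K) = 0 \quad \text{for every } k \in \{0, 1, \ldots, d-1\} \text{ and every } W' \subseteq [n] \text{ with } |W'| = j+k,
\]
while the conclusion $\beta^K_{i+1, j+d}(I_\Delta) = 0$ translates to $\tilde H_{j+d-i-3}(\Delta_W; K) = 0$ for every $W \subseteq [n]$ with $|W| = j+d$.

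\textbf{Mayer--Vietoris at a vertex.} Fix such a $W$ and any vertex $v \in W$. The decomposition $\Delta_W = \overline{\mathrm{st}}_{\Delta_W}(v) \cup \mathrm{del}_{\Delta_W}(v)$ has contractible first piece (a cone on $v$), deletion equal to $\Delta_{W \setminus \{v\}}$, and intersection equal to $\mathrm{link}_{\Delta_W}(v)$. The reduced Mayer--Vietoris sequence~(\ref{Reduced Mayer-Vietoris sequence}) then yields the exact fragment
\[
\tilde H_{j+d-i-3}(\Delta_{W \setminus \{v\}}; K) \longrightarrow \tilde H_{j+d-i-3}(\Delta_W; K) \longrightarrow \tilde H_{j+d-i-4}(\mathrm{link}_{\Delta_W}(v); K).
\]
The leftmost term vanishes by the $k = d-1$ case of the hypothesis applied to the set $W \setminus \{v\}$ of size $j+d-1$, so the entire problem reduces to showing $\tilde H_{j+d-i-4}(\mathrm{link}_{\Delta_W}(v); K) = 0$.

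\textbf{Main obstacle: controlling the link.} The subcomplex $\mathrm{link}_{\Delta_W}(v) = (\mathrm{link}_\Delta(v))_{W \setminus \{v\}}$ is \emph{not} in general a restriction of $\Delta$ to any vertex subset, so the vanishing hypothesis does not apply to it directly. My plan is to iterate the Mayer--Vietoris decomposition inside the link, peeling off fresh vertices $v_2, v_3, \ldots$ one at a time; after $r$ iterations one is left needing to kill a homology group in degree $j+d-i-3-r$ on a subcomplex of the form $\{F \subseteq W \setminus T : F \cup T \in \Delta\}$ supported on a vertex set of size $j+d-r$, and at each stage one uses a different instance of the hypothesis to dispose of the deletion piece that appears. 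The delicate and core technical point is to verify that the homologies of the intermediate iterated-link complexes are genuinely controlled by the vanishings $\beta^K_{i, j+k}(I_\Delta) = 0$ — via Hochster's formula applied either to $I_\Delta$ or to an auxiliary ideal obtained from the link construction — so that the full window $k \in \{0, 1, \ldots, d-1\}$, and exactly this window, is what the argument consumes. It is at this bookkeeping step that one should see why $d$ consecutive vanishing hypotheses, and no fewer, are needed.
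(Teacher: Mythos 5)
Your reduction to the square-free case and the translation via Hochster's formula are correct and match the paper. But the heart of your argument --- the ``Main obstacle'' paragraph --- is not a proof: you explicitly defer the step where the iterated link complexes are to be controlled by the hypotheses, and this is precisely where the difficulty lives. The complex $\mathrm{link}_{\Delta_W}(v)$, and more generally $\{F \subseteq W\setminus T \colon F\cup T\in\Delta\}$, is not a restriction $\Delta_{W'}$ of $\Delta$, so Hochster's formula for $I_\Delta$ says nothing about its homology; the vanishings $\beta^K_{i,j+k}(I)=0$ give you no handle on these complexes, and no amount of bookkeeping with the window $k\in\{0,\ldots,d-1\}$ fixes that. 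Peeling off an \emph{arbitrary} vertex $v$ also cannot work in principle, since the choice of $v$ must interact with the generators of $I$ (the non-faces of $\Delta$) for the degree shift by $d$ to appear.

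The paper's proof sidesteps links entirely by a different choice of decomposition. Since $\tilde H_{j+d-i-3}(\Delta_W;K)\neq 0$ forces $I_{\Delta_W}\neq 0$, there is a $d$-subset $A=\{a_1,\ldots,a_d\}\subseteq W$ with $A\notin\Delta$ (any non-face of size $\leq d$ enlarges to one of size exactly $d$). Because $A$ is a non-face, every face of $\Delta_W$ omits some $a_i$, whence
\[
\Delta_W=\bigcup_{i=1}^{d}\Delta_{W\setminus\{a_i\}},
\]
and --- this is the key point --- every union, and every pairwise intersection of unions, arising in the ensuing Mayer--Vietoris induction is again a union of restrictions $\Delta_{W\setminus T}$ with $T\subseteq A$. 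These restrictions live on vertex sets of sizes $j, j+1,\ldots,j+d-1$, which is exactly the window covered by the $d$ vanishing hypotheses via Hochster's formula; Proposition~\ref{Homology of equidimensional ideals} carries out the induction. To repair your write-up, replace the star/deletion/link decomposition at a vertex by the decomposition along a $d$-element non-face and run the Mayer--Vietoris induction on the resulting unions of restrictions.
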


To prove this theorem, let us first consider the square-free case. In this case, we need the following two propositions to prove the main theorem.
\begin{prop} \label{Homology of equidimensional ideals}
	Let $\Delta$ be a simplicial complex on the vertex set $[n]$ and $A=\{a_1, \ldots, a_d\}$ be a $d$-subset of $[n]$ such that $A \notin \Delta$. Let $I:=I_\Delta \subset K[x_1, \ldots, x_n]$ be the Stanley-Reisner ideal of $\Delta$ and $i \geq 0, j\geq i+d$ be non-negative integers with,
	$$\beta^K_{i,j}(I) = \beta^K_{i,j+1}(I) = \cdots = \beta^K_{i,j+(d-1)}(I) = 0.$$
	If $W$ is a subset of $[n]$ with $|W|=j+d$ and $A \subset W$, then:
	\begin{equation} \label{EQ Homology of Equidimensinal ideals}
	\tilde{H}_{j-i+(d-t-3)} \left( \bigcup\limits_{i={j_0}}^{d} \Delta_{W \setminus \{a_1, \ldots, a_t, a_i\}} ;K \right) =0
	\end{equation}
	for all $t, j_0$, with $0 \leq t \leq d-1$ and $t< j_0 \leq d$.
\end{prop}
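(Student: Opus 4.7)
The plan is to use Hochster's formula (Theorem~\ref{Hochster Formula}) to convert the Betti-number hypothesis into the vanishing of reduced homology of induced subcomplexes of controlled sizes, and then to establish \eqref{EQ Homology of Equidimensinal ideals} by a double downward induction on $(t,j_0)$, with the Mayer--Vietoris sequence \eqref{Reduced Mayer-Vietoris sequence} as the engine. (For clarity I rename the running index of the union in \eqref{EQ Homology of Equidimensinal ideals} from $i$ to $\ell$, since $i$ is already in use.) By Hochster, the assumption $\beta^K_{i,j+s}(I)=0$ for $s=0,\dots,d-1$ unpacks to
\[\tilde H_{j+s-i-2}(\Delta_{W'};K)=0\qquad\text{for every }W'\subseteq[n]\text{ with }|W'|=j+s,\]
for each such $s$.

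The outer induction runs on $t$ from $d-1$ down to $0$; for each $t$, the inner induction runs on $j_0$ from $d$ down to $t+1$. The inner base $j_0=d$ is the real starting point: the union collapses to the single complex $\Delta_{W\setminus\{a_1,\dots,a_t,a_d\}}$ on $j+d-t-1$ vertices, and the target degree $j-i+d-t-3=(j+d-t-1)-i-2$ matches Hochster's formula applied to $\beta^K_{i,j+(d-t-1)}(I)=0$, which is part of the hypothesis because $0\le d-t-1\le d-1$. In particular this handles the outer base $t=d-1$ uniformly.

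For the inductive step, fix $t\le d-2$ and $j_0$ with $t<j_0<d$, and write $U_{t,j_0}:=\bigcup_{\ell=j_0}^{d}\Delta_{W\setminus\{a_1,\dots,a_t,a_\ell\}}$ as $\Delta_1\cup\Delta_2$, where $\Delta_1:=\Delta_{W\setminus\{a_1,\dots,a_t,a_{j_0}\}}$ and $\Delta_2:=U_{t,j_0+1}$. Using the elementary identity $\Delta_X\cap\Delta_Y=\Delta_{X\cap Y}$ and distributivity of intersection over union,
\[\Delta_1\cap\Delta_2=\bigcup_{\ell=j_0+1}^{d}\Delta_{W\setminus\{a_1,\dots,a_t,a_{j_0},a_\ell\}},\]
which, after relabelling the elements of $A$ so that $a_{j_0}$ is moved into position $t+1$, is precisely a union of the type treated at parameters $(t+1,j_0+1)$. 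Setting $N:=j-i+d-t-3$, the Mayer--Vietoris sequence supplies the exact piece
\[\tilde H_{N}(\Delta_1;K)\oplus\tilde H_{N}(\Delta_2;K)\to\tilde H_{N}(U_{t,j_0};K)\to\tilde H_{N-1}(\Delta_1\cap\Delta_2;K),\]
in which $\tilde H_N(\Delta_1;K)=0$ by the inner base case, $\tilde H_N(\Delta_2;K)=0$ by the inner inductive hypothesis at $(t,j_0+1)$, and $\tilde H_{N-1}(\Delta_1\cap\Delta_2;K)=0$ by the outer inductive hypothesis at $(t+1,j_0+1)$ (noting that $N-1=j-i+d-(t+1)-3$). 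Hence $\tilde H_N(U_{t,j_0};K)=0$, as required.

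The main obstacle is purely organizational: first, ensuring that the relabelling used to recognize $\Delta_1\cap\Delta_2$ as an instance of the inductive hypothesis is legitimate, which is valid because the statement depends on $A$ only as a set and the indexing $a_1,\dots,a_d$ may be permuted freely; and second, lining up the three degrees supplied by the two inductive hypotheses and Hochster with the degree shift imposed by Mayer--Vietoris. Once these bookkeeping points are verified, the argument is essentially mechanical.
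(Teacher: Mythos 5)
Your proof is correct and follows essentially the same route as the paper: the same decomposition of the union into its first term and the tail, the same identification of the intersection as an instance of the statement at $(t+1,j_0+1)$ after relabelling $A$, and the same use of Hochster plus Mayer--Vietoris; your explicit inner downward induction on $j_0$ is just the paper's ``repeat the peeling until one complex remains'' phrased as an induction. The only cosmetic difference is that the paper indexes the outer induction by $t'=d-t$ and leaves the relabelling of $A$ implicit, which you make explicit.
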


\begin{proof}
	We use induction on $t':=d-t$ to obtain (\ref{EQ Homology of Equidimensinal ideals}). Note that, $1 \leq t' \leq d$ and our assumption $\beta^K_{i,j}(I)=0$, implies that:
	\begin{equation*}
	\tilde{H}_{j-i-2}\left( \Delta_{W \setminus \{a_1, \ldots, a_d\}};K\right) =0.
	\end{equation*}
	This proves (\ref{EQ Homology of Equidimensinal ideals}), for $t'=1$. Let $1<t' \leq d-1$ and (\ref{EQ Homology of Equidimensinal ideals}) holds for $t'$. We must show that, (\ref{EQ Homology of Equidimensinal ideals}) is true for $t'+1$. That is, we must show that:
	\begin{equation} \label{EQ2 Homology of Equidimensinal ideals}
	\tilde{H}_{j-i+(d-t-2)} \left( \bigcup\limits_{i={j_0}}^{d} \Delta_{W \setminus \{a_1, \ldots, a_{t-1}, a_i\}} ;K \right) =0, \qquad \text{for all $j_0$, with } t \leq j_0 \leq d. 
	\end{equation}
	
	By our assumption, $\beta^K_{i,j+(d-t)}(I)=0$. So that,
	\begin{equation*} 
	\tilde{H}_{j-i+(d-t-2)} \left( \Delta_{W \setminus \{a_1, \ldots, a_{t-1}, a_d\}} ;K \right) =0.
	\end{equation*}
	This gives (\ref{EQ2 Homology of Equidimensinal ideals}), for $j_0=d$. Now, fix $t \leq j_0 <d$. We observe that:
	\begin{align}
	\bigcup\limits_{i={j_0}}^{d} \Delta_{W \setminus \{a_1, \ldots, a_{t-1}, a_i\}} = \left( \Delta_{W \setminus \{a_1, \ldots, a_{t-1}, a_{j_0}\}} \right) \cup \left( \bigcup\limits_{i={j_0+1}}^{d} \Delta_{W \setminus \{a_1, \ldots, a_{t-1}, a_i\}} \right), \nonumber\\
	\left( \Delta_{W \setminus \{a_1, \ldots, a_{t-1}, a_{j_0}\}} \right) \cap \left( \bigcup\limits_{i={j_0+1}}^{d} \Delta_{W \setminus \{a_1, \ldots, a_{t-1}, a_i\}} \right) = \bigcup\limits_{i={j_0}+1}^{d} \Delta_{W \setminus \{a_1, \ldots, a_{t-1}, a_{j_0}, a_i\}} \nonumber
	\end{align}
	Our induction hypothesis, implies that:
	\begin{equation} \label{Local EQ a.4}
	\tilde{H}_{j-i+(d-t-3)} \left( \bigcup\limits_{i={j_0}+1}^{d} \Delta_{W \setminus \{a_1, \ldots, a_{t-1}, a_{j_0}, a_i\}}; K \right) =0.
	\end{equation}
	Also, 
	\begin{equation} \label{Local EQ a.5}
	\tilde{H}_{j-i+(d-t-2)} \left( \Delta_{W \setminus \{a_1, \ldots, a_{t-1}, a_{j_0}\}}; K \right) =0;
	\end{equation}
	for, $\beta^K_{i, j+(d-t)}(I) =0$. Using (\ref{Local EQ a.4}), (\ref{Local EQ a.5}) and Mayer-Vietoris long exact sequence (\ref{Reduced Mayer-Vietoris sequence}), we conclude that:
	\begin{align*}
	\tilde{H}_{j-i+(d-t-2)} \left( \bigcup\limits_{i={j_0}}^{d} \Delta_{W \setminus \{a_1, \ldots, a_{t-1}, a_i\}} ;K \right) \cong \tilde{H}_{j-i+(d-t-2)} \left( \bigcup\limits_{i={j_0}+1}^{d} \Delta_{W \setminus \{a_1, \ldots, a_{t-1}, a_i\}} ;K \right).
	\end{align*}
	If $j_0+1=d$, then (\ref{EQ2 Homology of Equidimensinal ideals}) holds, by our previous discussion. Otherwise, by repeating this process, after a finite number of steps, we get:
	\begin{align*}
	\tilde{H}_{j-i+(d-t-2)} \left( \bigcup\limits_{i={j_0}}^{d} \Delta_{W \setminus \{a_1, \ldots, a_{t-1}, a_i\}} ;K \right) \cong \tilde{H}_{j-i+(d-t-2)} \left(\Delta_{W \setminus \{a_1, \ldots, a_{t-1}, a_d\}} ;K \right).
	\end{align*}
	The homology in the right hand side is zero, because $\beta^K_{i, j+(d-t)}(I)=0$. This completes the proof.
\end{proof}

\begin{prop} \label{Growth of Beti numbers}
	Let $\Delta$ be a simplicial complex on the vertex set $[n]$ and $I=I_\Delta \subset S$ be the Stanley-Reisner ideal of $\Delta$. Let $d = \max \{ \deg (u) \colon \; u \in \mathcal{G}(I) \}$. If $i\geq 0$ and $j \geq i+d$ be non-negative integers such that,
	$$\beta^K_{i,j}(I) = \beta^K_{i,j+1}(I) = \cdots = \beta^K_{i,j+(d-1)}(I) = 0,$$
	then, $\beta^K_{i+1,j+d}(I) = 0$.
\end{prop}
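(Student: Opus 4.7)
The plan is to apply Hochster's formula (Theorem~\ref{Hochster Formula}) to translate the goal $\beta^K_{i+1,j+d}(I)=0$ into a statement about reduced homology: it suffices to show that for every $W \subseteq [n]$ with $|W| = j+d$, one has $\tilde{H}_{j-i+d-3}(\Delta_W;K)=0$ (note $j+d-(i+1)-2 = j-i+d-3$). I would then fix such a $W$ and split into two cases.

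If $W \in \Delta$, then $\Delta_W$ is a full simplex, so it is contractible and all its reduced homology vanishes. Otherwise $W \notin \Delta$, so $W$ contains at least one minimal non-face $B$ of $\Delta$. Because $I = I_\Delta$ is generated in degree at most $d$, every minimal non-face has cardinality $\le d$, so I can extend $B$ arbitrarily to a subset $A \subseteq W$ with $|A|=d$ (which is possible since $|W|=j+d \ge 2d$); $A$ is still a non-face of $\Delta$ because supersets of non-faces are non-faces. Write $A=\{a_1,\ldots,a_d\}$ and invoke Proposition~\ref{Homology of equidimensional ideals} with $t=0$ and $j_0=1$ to obtain
\[
\tilde{H}_{j-i+d-3}\Bigl(\bigcup_{\ell=1}^{d} \Delta_{W\setminus\{a_\ell\}};K\Bigr)=0.
\]
The key observation to finish is that $\Delta_W = \bigcup_{\ell=1}^{d}\Delta_{W\setminus\{a_\ell\}}$: the inclusion $\supseteq$ is immediate, and for $\subseteq$, given any face $F \in \Delta_W$, one has $A \not\subseteq F$ (since $A \notin \Delta$), hence some $a_\ell \notin F$, which places $F$ inside $\Delta_{W\setminus\{a_\ell\}}$. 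Consequently $\tilde{H}_{j-i+d-3}(\Delta_W;K)=0$, and Hochster's formula yields the proposition.

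The substantive work, namely the inductive argument on subsets of $A$ using repeated applications of the reduced Mayer--Vietoris sequence (\ref{Reduced Mayer-Vietoris sequence}), has already been performed in Proposition~\ref{Homology of equidimensional ideals}, so the main obstacle here is conceptual rather than computational: one must choose a \emph{size $d$} non-face $A$ inside $W$, even when the minimal non-face found inside $W$ is strictly smaller than $d$. Padding $B$ up to size $d$ is the right move, since it is exactly this uniform cardinality that lets the hypotheses $\beta^K_{i,j+s}(I)=0$ for $s=0,\ldots,d-1$ be fed one-by-one into the inductive step of the earlier proposition.
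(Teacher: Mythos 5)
Your proposal is correct and follows essentially the same route as the paper: Hochster's formula reduces the claim to the vanishing of $\tilde{H}_{j-i+d-3}(\Delta_W;K)$ for $|W|=j+d$, a size-$d$ non-face $A\subseteq W$ is produced (the paper phrases this contrapositively, deducing $I_{\Delta_W}\neq 0$ from a nonvanishing homology group, while you argue directly and dispose of the case $W\in\Delta$ separately), and the decomposition $\Delta_W=\bigcup_{\ell=1}^{d}\Delta_{W\setminus\{a_\ell\}}$ together with Proposition~\ref{Homology of equidimensional ideals} at $t=0$, $j_0=1$ finishes the argument. No gaps.
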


\begin{proof}
	If $\beta^K_{i+1,j+d}(I) \neq 0$, then by Theorem~\ref{Hochster Formula}, there exists $W \subset [n]$, with $|W|=j+d$ and
	\begin{equation} \label{Local EQ a.6}
	\tilde{H}_{j-i+(d-3)}\left( \Delta_W; K \right) \neq 0.
	\end{equation}
	In particular, (\ref{Local EQ a.6}) implies that, $I_{\Delta_W} \neq 0$. Hence, by our choice of $d$, there exist elements $a_1, \ldots, a_d \in W$, such that $\{a_1, \ldots, a_d\} \notin \Delta$. One can easily check that:
	\begin{equation*}
	\Delta_W = \bigcup\limits_{i=1}^{d} \Delta_{W \setminus \{a_i\}}.
	\end{equation*}
	Hence, applying Proposition~\ref{Homology of equidimensional ideals}, for $t=0, j_0=1$, we get:
	\begin{equation*}
	\tilde{H}_{j-i+(d-3)}\left( \Delta_W; K \right) = \tilde{H}_{j-i+(d-3)}\left( \bigcup\limits_{i=1}^{d} \Delta_{W \setminus \{a_i\}}; K \right) = 0.
	\end{equation*}
	This contradicts (\ref{Local EQ a.6}).
\end{proof}

Now we are ready to prove Theorem~\ref{Growth of Beti numbers-Cor1}.
\begin{proof}[Proof of Theorem~\ref{Growth of Beti numbers-Cor1}]

Using the technique of polarization (See for example \cite[Section 1.6]{HerzogHibi}), Proposition~\ref{Growth of Beti numbers} can be easily extended to monomial ideals as well. To be more precise, let $J:= \mathcal{P}(I) $ be the polarization of $I$. Then, by \cite[Corollary 1.6.3(a)]{HerzogHibi}, we have $\beta_{i,j}^K(I) = \beta_{i,j}^K(J)$ for all $i$ and $j$. Since $J$ is a square-free monomial ideal, the conclusion follows immediately from Proposition~\ref{Growth of Beti numbers}.
\end{proof}

\begin{rem}
Let $G$ be a simple graph on the vertex set $[n]$ and $I=I \left( {G} \right)$ be the edge ideal of ${G}$. It is shown in \cite[Theorem 1.8]{Oscar} that if $i \geq 0$ and $j \geq i+2$ are integers such that $\beta_{i,j}^K (I) = \beta_{i,j+1}^K (I) =0$, then $\beta_{i+1,j+2}^K (I) =0$. Clearly, this result (and its consequences) is a special case of Theorem~\ref{Growth of Beti numbers-Cor1}. In the next section, we propose more applications of Theorem~\ref{Growth of Beti numbers-Cor1}.
\end{rem}

\section{Some applications} \label{Applications}

\subsection*{A criterion for linear resolution}
Let $I= I_\Delta$ be a square-free monomial ideal generated in degree (at most) $d$. In order to show that $I$ has a $d$-linear resolution, we must show that  the reduced homology vanishes in all degrees$\neq d-2$. The following proposition, shows that, this problem concerns to vanishing of the reduced homology observed only in degrees $d-1, \ldots, \min \left\{n-1, 2d-3 \right\}$.

\begin{prop} \label{vanishing of reduced homology}
Let $\Delta$ be a simplicial complex on the vertex set $[n]$. Let $I=I_\Delta \subset S$ be the Stanley-Reisner ideal generated in degree $d$. Then the following are equivalent:
\begin{itemize}
\item[\rm (a)] $\tilde{H}_i \left(\Delta_W; K \right) = 0$, for all $W \subseteq [n]$ and all $i= d-1, \ldots, \min \left\{n-1, 2d-3 \right\}$;
\item[\rm (b)] $\beta^K_{i,j}(I) = 0$, for all $j>i+d$;
\item[\rm (c)] $I$ has a $d$-linear resolution.
\end{itemize}
\end{prop}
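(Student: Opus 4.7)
The plan is to verify (b)$\Leftrightarrow$(c) by a routine minimality argument, to derive (b)$\Rightarrow$(a) immediately from Hochster's formula, and to reserve the real content for (a)$\Rightarrow$(b), which I will prove by induction on the homological index using Theorem~\ref{Growth of Beti numbers-Cor1} to propagate vanishing out of the finite range $\{d-1,\ldots,2d-3\}$ into all of $\{d-1,d,d+1,\ldots\}$.

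For (b)$\Leftrightarrow$(c), I would rely on the standard observation that, in a minimal graded free resolution of an ideal $I$ generated in degree $d$, the entries of each differential $\varphi_i$ lie in the maximal ideal, so the minimal shifts strictly increase at every homological step and $\beta^K_{i,j}(I)=0$ whenever $j<i+d$. A $d$-linear resolution is precisely the vanishing of $\beta^K_{i,j}(I)$ for all $j\neq i+d$, which under this automatic lower bound is equivalent to (b). For (b)$\Rightarrow$(a), fix $W\subseteq[n]$ and $k$ with $d-1\leq k\leq\min\{n-1,2d-3\}$; if $|W|\leq k+1$ then $\tilde H_k(\Delta_W;K)=0$ for dimensional reasons, and otherwise the choice $i:=|W|-k-2\geq 0$, $j:=|W|$ satisfies $j-i-2=k$ and $j-i>d$, so Hochster's formula together with (b) gives $\dim_K\tilde H_k(\Delta_W;K)=0$.

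The substantive implication (a)$\Rightarrow$(b) I would prove by induction on $i\geq 0$ with hypothesis ``$\beta^K_{i,j}(I)=0$ for all $j\geq i+d+1$''. The base case $i=0$ is immediate since $\mathcal{G}(I)$ lies in degree $d$. For the inductive step, fix $j\geq(i+1)+d+1=i+d+2$ and distinguish two cases. \emph{Case 1:} $i+d+2\leq j\leq i+2d$. If $j>n$ the Hochster sum is empty; otherwise set $k:=j-(i+1)-2=j-i-3\in\{d-1,\ldots,2d-3\}$ and note $k\leq j-3\leq n-3$, so $k$ lies in the range of (a), and Hochster's formula gives $\beta^K_{i+1,j}(I)=0$. \emph{Case 2:} $j\geq i+2d+1$. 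Set $j':=j-d\geq i+d+1\geq i+d$; each of the $d$ indices $j',j'+1,\ldots,j'+(d-1)=j-d,\ldots,j-1$ is at least $i+d+1$, so by the induction hypothesis $\beta^K_{i,j'+t}(I)=0$ for $t=0,\ldots,d-1$, and Theorem~\ref{Growth of Beti numbers-Cor1} yields $\beta^K_{i+1,j'+d}(I)=\beta^K_{i+1,j}(I)=0$. Since the two cases exhaust $j\geq i+d+2$, the induction closes.

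The only delicate point is the alignment of the two case ranges. Theorem~\ref{Growth of Beti numbers-Cor1} needs $d$ consecutive zeros in row $i$ starting at some $j'\geq i+d$; but $\beta^K_{i,i+d}$ may be nonzero (it is exactly the top of a linear strand), so the chain has to start at $j'\geq i+d+1$, forcing the resulting position $j=j'+d$ to be $\geq i+2d+1$. This is exactly complementary to the interval $\{i+d+2,\ldots,i+2d\}$ of length $d-1$ produced by (a) in Case~1, so the two pieces fit together without gap. Verifying this dovetailing is the one place where the arithmetic deserves care.
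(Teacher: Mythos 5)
Your proof is correct and follows essentially the same route as the paper: (b)$\Leftrightarrow$(c) by minimality/definition, (b)$\Rightarrow$(a) straight from Hochster's formula, and (a)$\Rightarrow$(b) by induction on the homological index with the same two-case split — the range $j\leq i+2d$ handled by hypothesis (a) via Hochster, and $j\geq i+2d+1$ handled by Theorem~\ref{Growth of Beti numbers-Cor1} applied to a block of $d$ consecutive zeros supplied by the induction hypothesis. Your version is in fact slightly more careful than the paper's (e.g.\ checking $k\leq n-1$ and the empty-sum case $j>n$), and the dovetailing of the two case ranges that you flag is exactly right.
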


\begin{proof}
The implications (b)$\Rightarrow$(c) is by definition while (c)$\Rightarrow$(a) is a direct consequence of Hochster formula. We need only to prove (a)$\Rightarrow$(b). We use induction on $i$ to obtain (b). Note that (b) holds for $i=0$, since $I$ is generated in degree $d$. Let $i>0$ and (b) holds for $i-1$. We consider two cases for $j$.
\begin{itemize}
\item[] \textbf{Case 1.} If $i+d < j < i+2d$, then by our assumption, 
\begin{align*}
\tilde{H}_{j-i-2} \left(\Delta_W; K \right) = 0, \qquad \text{for all } W \subseteq [n].
\end{align*}
Hence, by Hochster formula, we obtain
\begin{align*}
\beta^K_{i,j}(I_\Delta) = \sum\limits_{\substack{W \subset [n] \\ |W|=j}}{\dim_K \tilde{H}_{j-i-2}(\Delta_W; K)} =0.
\end{align*}

\item[] \textbf{Case 2.} If $j \geq i+2d$, then by our induction hypothesis, we have
$$\beta^K_{i-1,j-d} \left( I \right) = \cdots = \beta^K_{i-1,j-1} \left( I \right) = 0.$$
Now, Theorem~\ref{Growth of Beti numbers-Cor1} implies that $\beta^K_{i,j} \left( I \right) = 0$.
\end{itemize}
\end{proof}

\begin{ex}[Fr\"oberg's Theorem] \label{example for froberg}
In this example, we use Proposition~\ref{vanishing of reduced homology}, to give a very short proof for Fr\"oberg's Theorem (Theorem~\ref{Froberg Theorem}). Let $\Delta= \Delta(G)$ be the clique complex of $G$. Then by Proposition~\ref{vanishing of reduced homology}, $I_\Delta = I \left( \bar{G} \right)$ has a linear resolution if and only if $\tilde{H}_1 \left( \Delta_W; K \right) =0$, for all $W \subseteq [n]$. Since $\tilde{H}_1 \left( \Delta; K \right)$ is generated by induced cycles in $G$, so having linear resolution is equivalent to saying that $G$ is a chordal graph. 
\end{ex}

\subsection*{$N_{d,p}$-property and Green-Lazarsfeld index}
The (Green-Lazarsfeld) index of a graded ideal measures the number of linear steps in the graded minimal free resolution of the ideal. Let $I \neq 0$ be a homogeneous ideal in $S=K[x_1, \ldots, x_n]$. The ideal $I$ is said to satisfy ${\rm N}_{d,p}$-property ($p>0$), if
\begin{equation*}
\beta^K_{i, i+j} (I) =0, \quad \text{for all } i < p, \text{ and } j>d.
\end{equation*}
For a non-zero homogeneous ideal $I \subset S$, it is clear that, $I$ has a $d$-linear resolution if and only if, $I$ satisfies ${\rm N}_{d,p}$-property, for all $p > 0$.

\begin{table}[!htp] 
\centering
\begin{tabular}{|c|c c c c c c c|}
\hline
& $0$ & $1$ & $\cdots$ & $i$ & $\cdots$ & $p-1$ & $p$ \\
\hline
$1$ & & & & & & \multicolumn{1}{c:}{} & \\
$2$ & & & & & & \multicolumn{1}{c:}{} & \\
$\vdots$ & & & & & & \multicolumn{1}{c:}{} & \\
$j$ & & & & $\beta^K_{i, i+j}$ & & \multicolumn{1}{c:}{} & \\
$\vdots$ & & & & & & \multicolumn{1}{c:}{} & \\
$d$ & & & & & &  \multicolumn{1}{c:}{} & \\
\cline{2-7}
$d+1$ & $0$ & $0$ & $\cdots$ & $0$ & $\cdots$ & \multicolumn{1}{c|}{$0$} & $\times$ \\
 & $0$ & $0$ & $\cdots$ & $0$ & $\cdots$ & \multicolumn{1}{c|}{$0$} & $\times$ \\
$\vdots$ & $\vdots$ & $\vdots$ & & $\vdots$ &  & \multicolumn{1}{c|}{$\vdots$} & $\vdots$ \\
$\reg(I)$ & $0$ & $0$ & $\cdots$ & $0$ & $\cdots$ & \multicolumn{1}{c|}{$0$} & $\times$ \\
\cline{1-8}
\end{tabular}
\vspace*{2mm}
\caption{Betti diagram of an ideal with ${\rm N}_{d,p}$ property}
\label{Betti diagram of ideal with N property}
\end{table}

Let $I$ be a non-zero homogeneous ideal and $d=\mathrm{indeg}(I)$. The number,
$$\mathrm{index}(I) = \sup \left\{p \colon \qquad I \text{ satisfies $N_{d,p}$-property} \right\}$$
is called the \textit{index} of $I$. So that $\mathrm{index}(I) = \infty$ if and only if $I$ has a $d$-linear resolution. It is, in general, very difficult to determine the precise value of the Green-Lazarsfeld index. Important conjectures, such as Green's conjecture \cite[Chapter 9]{Eisenbud}, predicts the value of this invariant for certain families of varieties. However, the remarkable result of Eisenbud et al., enables us to find the index of ideals associated to graphs \cite[Theorem 2.1]{EGHP}. In the following we recover this result by our own method.

\begin{thm} \label{Eisenbud theorem}
Let $G$ be a simple graph on the vertex set $[n]$ and $I=I(\bar{G}) \subset S$ be the corresponding ideal. Then the following are equivalent:
\begin{itemize}
\item[\rm (a)] $I$ satisfies $N_{2,p}$-property;
\item[\rm (b)] every induced cycle in $G$ has length $\geq p+3$.
\end{itemize}
In particular, 
\begin{equation} \label{Index of a graph}
\mathrm{index}(I) = \inf \left\{ |C|-3 \colon \quad C \text{ is an induced cycle in } G \text{ of length}>3 \right\}
\end{equation}
\end{thm}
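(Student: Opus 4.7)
The strategy is to convert between graded Betti numbers of $I = I_{\Delta(G)}$ and reduced homologies $\tilde{H}_{\bullet}(\Delta(G)_W;K)$ of induced subcomplexes of the clique complex via Hochster's formula (Theorem~\ref{Hochster Formula}), and to propagate vanishing of Betti numbers from one column of the Betti table to the next via Theorem~\ref{Growth of Beti numbers-Cor1} in its $d=2$ form. I would prove the two implications separately.

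The direction (a)$\Rightarrow$(b) I would do by contraposition. Suppose $G$ admits an induced cycle $C$ of length $l$ with $4 \le l \le p+2$ and set $W := V(C)$. Since $l \ge 4$, $G_W$ is a chordless cycle, so $\Delta(G)_W = \Delta(G_W)$ equals $C$ as a $1$-dimensional simplicial complex and $\tilde{H}_1(\Delta(G)_W;K) \cong K$. Hochster's formula then yields $\beta^K_{l-3,\,l}(I) \neq 0$, with $l-3 \le p-1 < p$ and $l-(l-3) = 3 > 2$; this violates $N_{2,p}$.

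For (b)$\Rightarrow$(a) I would argue by induction on $p$. The base case $p = 1$ is automatic, since $I$ is generated in degree $2$. For the inductive step, (b) for $p$ clearly implies (b) for $p-1$, so $N_{2,p-1}$ holds by the inductive hypothesis; it then remains to prove $\beta^K_{p-1,\,m}(I) = 0$ for every $m \ge p+2$. When $m \ge p+3$, Theorem~\ref{Growth of Beti numbers-Cor1} applied with $i = p-2$ and $j = m-2$ closes the case: the two required vanishings $\beta^K_{p-2,\,m-2}(I) = \beta^K_{p-2,\,m-1}(I) = 0$ both follow from $N_{2,p-1}$ since $m-2,\,m-1 > p$.

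The main obstacle is the boundary value $m = p+2$, which Theorem~\ref{Growth of Beti numbers-Cor1} cannot reach because the Betti number $\beta^K_{p-2,\,p}(I)$ lies on the linear strand and need not vanish. I would dispatch it by a direct combinatorial argument: for $|W| = p+2$, an induced cycle of length $\ge 4$ in $G_W$ would also be induced in $G$ with length at most $p+2$, contradicting (b); hence $G_W$ is chordal, and the fact recalled in Example~\ref{example for froberg} that first reduced homology of a clique complex is generated by induced cycles of length $\ge 4$ forces $\tilde{H}_1(\Delta(G)_W;K) = 0$. Hochster's formula then gives $\beta^K_{p-1,\,p+2}(I) = 0$, completing the induction. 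Formula~(\ref{Index of a graph}) follows at once by taking the supremum of $p$ in the established equivalence.
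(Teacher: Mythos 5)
Your proof is correct and follows essentially the same route as the paper: the implication (a)$\Rightarrow$(b) is the identical Hochster-formula computation on the vertex set of a short induced cycle, and (b)$\Rightarrow$(a) uses Theorem~\ref{Growth of Beti numbers-Cor1} with $d=2$ to kill everything strictly above the linear strand except the boundary degree $j=i+3$, which is then handled by chordality of $G_W$ and the fact that $\tilde{H}_1$ of a clique complex is generated by induced cycles. The only difference is organizational --- you run an induction on $p$ where the paper takes a minimal counterexample $(i,j)$ --- and both arguments rest on exactly the same two ingredients.
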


\begin{proof}
(a)$\Rightarrow$(b). Assume on the contrary that $G$ has an induced cycle $C$, of length $j < p+3$ and let $W$ be the set of vertices of $C$. Then $|W|=j$ and $\Delta_W$ coincides with $C$. In particular, $\tilde{H}_1 \left( \Delta_W; K \right) \neq 0$. Now by Hochster formula, we have
\[
\beta^K_{j-3,j} (I) \geq \dim_K \tilde{H}_1 \left( \Delta_W; K \right) >0.
\]
This contradicts the fact that $I$ satisfies $N_{2,p}$-property.

(b)$\Rightarrow$(a). Assume that $I$ does not satisfy $N_{2,p}$-property and take the minimal  $i<p$ and $j>i+2$, such that $\beta^K_{i,j} (I) \neq 0$. If $j>i+3$, then the minimality of $i$ implies that $\beta^K_{i-1, j-2} \left( I \right) = \beta^K_{i-1,j-1}  \left( I \right) =0$. So, $\beta^K_{i,j} \left( I \right) =0$ by Theorem~\ref{Growth of Beti numbers-Cor1}. This contradicts to our choice of $i$ and $j$. Hence, $j=i+3$, and Hochster formula implies that $\tilde{H}_1 \left( \Delta_W; K \right) \neq 0$ for some $W \subset [n]$ with $|W|= j$. Again the minimality of $j$ implies that $\Delta_W$ coincides with an induced cycle in $G$; and  our assumption implies that $|W| \geq p+3$. Thus, $i\geq p$ which contradicts to our choice of $i$.

It is now immediately deduced from equivalence of (a) and (b) that, the index of $I$ is obtained from (\ref{Index of a graph}).
\end{proof}

\begin{ex}[Fr\"oberg's Theorem again]
In Example~\ref{example for froberg},  we have seen a short proof for Fr\"oberg's theorem on the classification of monomial ideals with $2$-linear resolution. This theorem can be  obtained as a result of Theorem~\ref{Eisenbud theorem} as well. Indeed, let $I=I(\bar{G})$ be the edge ideal of $\bar{G}$. Then $I$ has a $2$-linear resolution, if and only if $\mathrm{index} (I) = \infty$ and by equation~(\ref{Index of a graph}) this is the case, if and only if $G$ does not have any induced cycle of length $>3$. The later is equivalent to say that $G$ is chordal.
\end{ex}

\subsection*{Non-zero Betti numbers}
Let $M$ be a finitely generated graded $S$-module. The Hilbert syzygy's theorem, guarantees  that, there are only finite numbers $i, j$ such that $\beta_{i,j}^K(M)$ is non-zero. In general, it is so hard to determine the exact value of the Betti numbers. However, since many invariants of graded module depends only on the indices of non-zero Betti numbers, it is natural to ask, for which indices $i$ and $j$, we have a non-zero Betti number $\beta_{i,j}$. One of the known (easy) result on this subject is the following proposition.

\begin{prop}[{\cite[Prop. 1.9]{Eisenbud2}}] \label{simlar result with lower bound}
Let $M$ be a finitely generated $S$-module. If $d$ is an integer such that $\beta^K_{i,j} (M) = 0$ for all $j < d$, then $\beta_{i+1,j+1}^K (M) = 0$ for all $j < d$.
\end{prop}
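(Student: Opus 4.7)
The plan is to argue directly from the minimality of the graded free resolution, without invoking the machinery developed in the previous sections. Fix a graded minimal free resolution
\[
\cdots \to F_{i+1} \xrightarrow{\varphi_{i+1}} F_i \xrightarrow{\varphi_i} F_{i-1} \to \cdots \to F_0 \to M \to 0,
\]
with $F_k = \bigoplus_j S(-j)^{\beta^K_{k,j}(M)}$ for each $k$. The hypothesis translates to the statement that every homogeneous basis element of $F_i$ has degree at least $d$.

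Now suppose, for contradiction, that $\beta^K_{i+1,j+1}(M) \neq 0$ for some $j < d$. Then $F_{i+1}$ has a homogeneous basis element $e$ of degree $j+1 \leq d$. Two consequences of minimality come into play simultaneously: first, $\varphi_{i+1}(e) \neq 0$, since otherwise the direct summand $S(-(j+1))$ generated by $e$ could be removed from $F_{i+1}$, contradicting minimality; second, $\varphi_{i+1}(F_{i+1}) \subseteq \mathfrak{m} F_i$, where $\mathfrak{m} = (x_1,\ldots,x_n)$. Writing $\varphi_{i+1}(e) = \sum_\ell m_\ell f_\ell$ with $m_\ell \in \mathfrak{m}$ homogeneous of positive degree and $f_\ell$ basis elements of $F_i$, a degree count forces $\deg(f_\ell) \leq j < d$ for at least one $\ell$ with $m_\ell f_\ell \neq 0$. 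This contradicts the hypothesis that all basis elements of $F_i$ sit in degrees $\geq d$.

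The only point requiring real justification is the minimality statement $\varphi_{i+1}(F_{i+1}) \subseteq \mathfrak{m} F_i$, but this is a standard characterization of minimal graded free resolutions over the local/graded ring $S$ and can be cited. No combinatorial input, polarization, or appeal to Theorem~\ref{Growth of Beti numbers-Cor1} is needed; this proposition is a companion lower-bound statement to the upper-bound propagation results of Section~\ref{Section main Theorem}, and the entire argument is a short degree chase in the minimal free resolution.
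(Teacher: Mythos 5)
Your argument is correct, and in fact the paper offers no proof of this proposition at all: it is quoted verbatim from Eisenbud's \emph{Geometry of Syzygies} (the cited Prop.~1.9), where the proof is exactly the degree chase you give. So you have supplied the standard argument for a result the paper only cites; nothing in Sections~2--3 of the paper is needed, as you correctly observe.

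One small point worth tightening: your justification that $\varphi_{i+1}(e)\neq 0$ (``otherwise the summand generated by $e$ could be removed'') is morally right but slightly informal, since removing a summand of $F_{i+1}$ also requires adjusting $\varphi_{i+2}$ and rechecking exactness. The cleaner route uses minimality one step further along the resolution: if $\varphi_{i+1}(e)=0$, then by exactness $e\in\ker\varphi_{i+1}=\operatorname{im}\varphi_{i+2}\subseteq\mathfrak{m}F_{i+1}$, contradicting the fact that a basis element of a free module never lies in $\mathfrak{m}F_{i+1}$. With that substitution the proof is complete and fully rigorous: $\varphi_{i+1}(e)$ is homogeneous of degree $j+1$, lies in $\mathfrak{m}F_i$, and is nonzero, so some basis element of $F_i$ appearing in it has degree at most $j<d$, contradicting the hypothesis.
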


It follows from Proposition~\ref{simlar result with lower bound} that, if $I$ is a monomial ideal with $c:= \mathrm{indeg}(I)$ and $\beta_{i,j}^K (I) \neq 0$, then $j \geq i+c$. We proceed with a partial generalization of this fact (see Corollary~\ref{Growth of Beti numbers-Cor3}); but first we need the following consequence of Theorem~\ref{Growth of Beti numbers-Cor1}.

\begin{cor} \label{Growth of Beti numbers-Cor2}
Let $I \subset S:=K[x_1, \ldots, x_n]$ be a non-zero monomial ideal, $\rho={\rm projdim}(I)$ and $d = \max \{ \deg (u) \colon \quad u \in \mathcal{G}(I) \}$. Then,
	\begin{itemize}
		\item[\rm (i)] $t_{i+1}^S(I) \leq t_i^S(I) +d$.
		\item[\rm (ii)] $\reg(I) \leq \rho(d-1)+d$.
	\end{itemize}
\end{cor}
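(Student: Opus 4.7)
For part (i), I would proceed by a contrapositive argument based on Theorem~\ref{Growth of Beti numbers-Cor1}. Assume $t_{i+1}^S(I)$ is finite (otherwise the inequality is vacuous) and suppose, toward a contradiction, that $t_{i+1}^S(I) > t_i^S(I) + d$. Setting $j := t_{i+1}^S(I) - d$, we have $j > t_i^S(I)$, and therefore the $d$ consecutive Betti numbers $\beta^K_{i,j}(I), \beta^K_{i,j+1}(I), \ldots, \beta^K_{i,j+d-1}(I)$ all vanish. When the hypothesis $j \geq i+d$ of the main theorem is met --- equivalently, $t_{i+1}^S(I) \geq i + 2d$ --- Theorem~\ref{Growth of Beti numbers-Cor1} immediately forces $\beta^K_{i+1, j+d}(I) = \beta^K_{i+1, t_{i+1}^S(I)}(I) = 0$, contradicting the very definition of $t_{i+1}^S$.

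The residual range $t_{i+1}^S(I) < i + 2d$ is what I anticipate as the main technical obstacle: the raw application of Theorem~\ref{Growth of Beti numbers-Cor1} only yields $t_{i+1}^S(I) \leq \max(t_i^S(I) + d, i + 2d - 1)$, so to pin this down to $t_i^S(I) + d$ we need the auxiliary lower bound $t_i^S(I) \geq d + i - 1$ whenever $t_i^S(I)$ is finite. Granting this, $t_i^S(I) + d \geq (d+i-1) + d = i + 2d - 1 \geq t_{i+1}^S(I)$, which closes the gap. The auxiliary bound is proved by induction on $i$ (the base case $t_0^S(I) = d$ being trivial); the inductive step combines Proposition~\ref{simlar result with lower bound} (giving ${\rm indeg}(\mathrm{Syz}_i(I)) \geq {\rm indeg}(I) + i$) with the minimality of the resolution, and can alternatively be obtained by reducing to the squarefree case via polarization (\cite[Corollary 1.6.3(a)]{HerzogHibi}) and inspecting Taylor's complex directly, whose matrix entries have degree at most $d$.

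Part (ii) then follows from (i) by a straightforward iteration. Starting from $t_0^S(I) = d$ and applying (i) repeatedly, we obtain $t_i^S(I) \leq d + id = (i+1)d$ for every $0 \leq i \leq \rho$. Subtracting $i$, we get $t_i^S(I) - i \leq d + i(d-1)$, a quantity nondecreasing in $i$ for $d \geq 1$. Hence the maximum over $0 \leq i \leq \rho$ is attained at $i = \rho$, yielding $\reg(I) = \max_i \bigl(t_i^S(I) - i\bigr) \leq d + \rho(d-1) = \rho(d-1) + d$, as claimed.
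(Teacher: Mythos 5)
Your derivation of (ii) from (i) is exactly the paper's argument and is fine. For (i), you have correctly put your finger on a real subtlety: Theorem~\ref{Growth of Beti numbers-Cor1} only applies when the window starts at $j\geq i+d$, so a direct application only kills $\beta^K_{i+1,j'}(I)$ for $j'\geq i+2d$, leaving the residual range $t_i^S(I)+d<j'<i+2d$ untreated. (The paper's own one-line proof of (i) silently ignores this hypothesis, so you have identified something the paper itself does not address.) The problem is that the lemma you propose to close the gap is false.

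The auxiliary bound $t_i^S(I)\geq d+i-1$ fails. Take $S=K[x_1,x_2,x_3,x_4,y]$ and $I=(x_1x_2x_3x_4,\;x_1y,\;x_2y,\;x_3y,\;x_4y)$, so $d=4$. Writing $I=(u)+J$ with $u=x_1x_2x_3x_4$ and $J=y(x_1,\ldots,x_4)$, one has $(u)\cap J=(uy)\cong S(-5)$, and the long exact sequence of Tor applied to $0\to (u)\cap J\to (u)\oplus J\to I\to 0$ gives $\mathrm{Tor}_i(K,I)\cong\mathrm{Tor}_i(K,J)$ for $i\geq 2$, while the connecting map out of $\mathrm{Tor}_0(K,S(-5))$ vanishes for degree reasons. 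The resulting Betti table is: $\beta_0$ in degrees $2,4$; $\beta_1$ in degrees $3,5$; $\beta_2$ concentrated in degree $4$; $\beta_3$ concentrated in degree $5$. Hence $t_2^S(I)=4<5=d+2-1$, even though $\mathrm{Tor}_3(K,I)\neq 0$. So your Case~B genuinely occurs and your argument cannot rule out a nonzero $\beta^K_{3,9}(I)$ (it is zero here, but not for the reasons you give). Moreover, the justifications you sketch could not prove the bound even were it true: Proposition~\ref{simlar result with lower bound} controls the \emph{initial} degree of $\mathrm{Tor}_i$ in terms of $c=\mathrm{indeg}(I)$, and the degree-$\leq d$ entries of the Taylor differential bound maximal shifts from \emph{above}, not below; neither yields a lower bound on $t_i^S(I)$ involving $d$. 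Closing the residual range requires a different input (compare the Koszul-homology argument of \cite{Herzog-Srinivasan}).
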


\begin{proof}
(i) By the definition of $t_i^S(I)$, we have $\beta_{i,j}^K(I) = 0$, for all $j>t_i^S(I)$. Hence, by Corollary~\ref{Growth of Beti numbers-Cor1} we have, $\beta_{i+1,j}^K(I) = 0$, for all $j>d+t_i^S(I)$. This proves (i).
	
(ii) We note that, $d=t_0^S(I)$, by our choice of $d$. Hence (i) implies that, $t_i^S(I) \leq (i+1)d$. So that,
	\begin{align*}
	\reg(I)&= \max \{t_i^S(I)-i \colon \qquad 0 \leq i \leq \rho \} \\
	& \leq \max \{(i+1)d -i \colon \quad 0 \leq i \leq \rho \} \leq \rho(d-1)+d.
	\end{align*}
\end{proof}

\begin{rem}
It is immediate from definition of Betti numbers that, $\beta^K_{i+1,j} \left( S/I \right) = \beta^K_{i,j} \left( I \right)$ holds for any homogeneous ideal $I$ in $S$ and for all $i, j$. Hence, for a monomial ideal $I \subset S$, we may restate Corollary~\ref{Growth of Beti numbers-Cor2}(i) as:
\begin{equation} \label{Subadditivity problem}
t_{i+1}^S \left( {S}/{I} \right) \leq t_i^S \left( {S}/{I} \right) + t_1^S \left( {S}/{I} \right).
\end{equation}
We consider the following generalization of (\ref{Subadditivity problem}):
\begin{equation} \label{Subadditivity problem 2}
t_{i+j}^S \left( {S}/{I} \right) \leq t_i^S \left( {S}/{I} \right) + t_j^S \left( {S}/{I} \right).
\end{equation}
for every $i$ and $j$. No counterexample is known to the validity of (\ref{Subadditivity problem 2}) for monomial ideals, while for arbitrary homogeneous ideal, (\ref{Subadditivity problem 2}) is not true \cite[Sec. 2]{Conca}. However, the inequality in (\ref{Subadditivity problem}) shows that (\ref{Subadditivity problem 2}) holds for $j=1$ and for monomial ideals.

In \cite[Corollary 1.9]{Oscar}, the authors obtained (\ref{Subadditivity problem}), for monomial ideals generated in degree $2$ and in \cite[Corollary 4]{Herzog-Srinivasan}, the authors proved the same inequality as in (\ref{Subadditivity problem}) for arbitrary monomial ideal, by an independent method. Another interesting case for which (\ref{Subadditivity problem 2}) comes true, is when $i+j = \# \left\{ \text{variables of } S \right\}$ and  $S/I$ is of Krull dimension at most $1$ \cite[Theorem 4.1]{ECU}.
\end{rem}

\begin{cor}  \label{Growth of Beti numbers-Cor3}
Let $I \subset S:=K[x_1, \ldots, x_n]$ be a monomial ideal, $c={\rm indeg}(I)$ and $d = \max \left\{ \deg (u) \colon \quad u \in \mathcal{G}(I) \right\}$.
\begin{itemize}
	\item[\rm (i)] If $\beta^K_{i,j}(I) \neq 0$, then $i+c \leq j \leq d(i+1)$.
	\item[\rm (ii)] If $I$ is square-free monomial ideal and $\beta^K_{i,j}(I) \neq 0$, then $i+c \leq j \leq \min \{n, d(i+1) \}$.
\end{itemize}
\end{cor}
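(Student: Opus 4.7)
The plan is to establish part (i) by separating the two inequalities and handling each by a simple induction on $i$, then to deduce part (ii) at once from (i) together with Hochster's formula. The lower bound $j\geq i+c$ will be a repeated application of Proposition~\ref{simlar result with lower bound}, while the upper bound $j\leq d(i+1)$ will be the telescoped form of Corollary~\ref{Growth of Beti numbers-Cor2}(i).

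For the lower bound in (i), I would start from the base case $i=0$: by the definition of the initial degree, $\beta^K_{0,j}(I)=0$ for every $j<c$. Applying Proposition~\ref{simlar result with lower bound} to $M=I$ with the integer $c$ yields $\beta^K_{1,j}(I)=0$ for every $j<c+1$. Iterating, an induction on $i$ gives $\beta^K_{i,j}(I)=0$ for all $j<i+c$, which is precisely the desired inequality.

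For the upper bound in (i), the base case is $t_0^S(I)=d$, from the choice of $d$ as the largest degree of a minimal generator. Since Corollary~\ref{Growth of Beti numbers-Cor2}(i) states $t_{i+1}^S(I)\leq t_i^S(I)+d$, a straightforward induction gives $t_i^S(I)\leq d(i+1)$. Any pair $(i,j)$ with $\beta^K_{i,j}(I)\neq 0$ therefore satisfies $j\leq t_i^S(I)\leq d(i+1)$.

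Finally, for (ii), the bounds $i+c\leq j\leq d(i+1)$ are inherited from (i), so only the new inequality $j\leq n$ remains. Since $I=I_\Delta$ is square-free, Hochster's formula (Theorem~\ref{Hochster Formula}) expresses $\beta^K_{i,j}(I)$ as a sum over subsets $W\subseteq[n]$ with $|W|=j$; if $j>n$ the indexing set is empty and the Betti number vanishes. I do not anticipate any substantial obstacle: both halves of (i) are clean inductions whose base cases come directly from the definitions of $c$ and $d$, and the only slightly delicate point is keeping track of the shift $j\mapsto j+1$ in Proposition~\ref{simlar result with lower bound} so that the lower bound advances by exactly one unit at each homological step.
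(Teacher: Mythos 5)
Your proof is correct. The upper bound $j\leq d(i+1)$ and the bound $j\leq n$ in (ii) are obtained exactly as in the paper: telescoping Corollary~\ref{Growth of Beti numbers-Cor2}(i) from $t_0^S(I)=d$, and observing that Hochster's formula sums over subsets $W\subseteq[n]$ with $|W|=j$. Where you diverge is the lower bound $j\geq i+c$: you run an induction on $i$ using Proposition~\ref{simlar result with lower bound} applied directly to $I$ as a graded module, which is precisely the derivation the paper sketches in the paragraph \emph{preceding} the corollary; the proof the paper actually writes out instead establishes the lower bound combinatorially, citing a vanishing result $\tilde{H}_i(\Delta_W;K)=0$ for $i<c-2$ from an external reference and feeding it into Hochster's formula, treating the square-free case first and then polarizing. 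Your route is arguably cleaner for part (i): since Proposition~\ref{simlar result with lower bound} holds for arbitrary finitely generated graded modules, you get the lower bound for general monomial ideals with no detour through polarization and no external homology-vanishing lemma; the paper's combinatorial route has the mild advantage of keeping the whole proof inside the Stanley--Reisner/Hochster framework already set up for part (ii). Both arguments are complete, and your bookkeeping of the degree shift in Proposition~\ref{simlar result with lower bound} is handled correctly.
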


\begin{proof}
First assume that, $I$ is a square-free monomial ideal with Stanley-Reisner complex $\Delta$. It follows from \cite[Proposition 3.1(i)]{mar2} that $\tilde{H}_i(\Delta_W; K) =0$, for all $i<c-2$ and $W \subset [n]$. In view of Theorem~\ref{Hochster Formula}, one has $i+c \leq j \leq n$, if $\beta_{i,j}^K(I) \neq 0$. Also, as we have seen in the proof of Corollary~\ref{Growth of Beti numbers-Cor2}(ii), we have $t_i^S(I) \leq (i+1)d$. Therefore, $j \leq (i+1)d$, if $\beta_{i,j}^K(I) \neq 0$.
	
Statement (ii) follows from the above discussion. Note that, polarization changes the number of variables in the polynomial ring while it does not change the graded Betti numbers. So, the same argument leads us to (i).
\end{proof}

\begin{rem} \rm
Let $c$ and $d$ be as in Corollary~\ref{Growth of Beti numbers-Cor3}. In \cite[Lemma 2.2]{Katzman} and \cite[Theorem 3.2.3]{HaVanTuyl1}, the authors used Taylor resolution to obtain Corollary~\ref{Growth of Beti numbers-Cor3}(i) for $c=d=2$ and in \cite[Theorem 2.6]{HaVanTuyl2}, the authors used the same method to obtain Corollary~\ref{Growth of Beti numbers-Cor3}(ii) for square-free monomial ideals that are generated by elements in the same degree (i.e. when, $c=d$).
\end{rem}

\begin{ex}
In this example, we show that the bounds in Corollary~\ref{Growth of Beti numbers-Cor3}(i) for possible non-zero Betti numbers, are sharp enough. To do this, let $G$ be a cycle of length $4$ and $I=I \left( \bar{G} \right) \subset S=K[x_1, x_2, x_3, x_4]$. One may verify that, the graded minimal free resolution of $I$ is
\[
0 \to S(-4) \to S^{2}(-2) \to I \to 0.
\]
Hence, the degree $j$ in $\beta_{1,4}^K(I) \neq 0$ obtains its upper bound in Corollary~\ref{Growth of Beti numbers-Cor3}. This example also, shows that the bound for the regularity in Corollary~\ref{Growth of Beti numbers-Cor2}(ii) is also sharp.
\end{ex}


\begin{thebibliography}{99}

\bibitem{CoCoA}
J.~Abbott, A.~Bigatti, and G.~Lagorio, ``{CoCoA-5}: a system for doing
  {C}omputations in {C}ommutative {A}lgebra.'' Available at
  \url{http://cocoa.dima.unige.it}.

\bibitem{Conca}
L.~L. Avramov, A.~Conca, and S.~B. Iyengar, ``Subadditivity of syzygies of
  koszul algebras,'' {\em Mathematische Annalen}, vol.~361, no.~1,
  pp.~511--534, 2014.

\bibitem{ref2}
J.~B\"{o}hm and S.~A.~Papadakis, ``On the structure of Stanley--Reisner rings associated to cyclic polytopes,'' \emph{Osaka J. Math.}, vol.~49, no.~1, pp.~81--100, 2012.

\bibitem{Singular}
W.~Decker, G.-M. Greuel, G.~Pfister, and H.~Sch\"onemann, ``{\sc Singular}
  {4-0-2} --- {A} computer algebra system for polynomial computations.''
  \url{http://www.singular.uni-kl.de}, 2015.

\bibitem{Eisenbud}
D.~Eisenbud, \textit{Commutative Algebra, with a View toward Algebraic Geometry}, Springer-Verlag, Berlin-Heidelberg-New York, 1995.

\bibitem{Eisenbud2}
D. Eisenbud, \textit{The Geometry of Syzygies: A second course in commutative algebra and algebraic geometry}, in: GTM 229, Springer, New York, 1995.

\bibitem{EGHP}
D.~Eisenbud, M.~Green, K.~Hulek and S.~Popescu, ``Restricting linear syzygies: algebra and geometry,'' {\em Compos. Math.} vol.~141, no.~6, pp. 1460--1478, 2005.

\bibitem{ECU}
D.~Eisenbud, C.~Huneke and B.~Ulrich, ``The regularity of Tor and graded Betti numbers,'' {\em American Journal of Mathematics}, vol.~128, no.~3, pp.~573--605, 2006.

\bibitem{Froberg}
R.~Fr\"{o}berg, ``On Stanley--Reisner rings,'' in: \emph{Topics in algebra}, Banach Center Publications, 26 Part 2, pp.~57--70, 1990.

\bibitem{HaVanTuyl1}
H.~T. H{\`a} and A. Van Tuyl, ``Resolutions of square-free monomial ideals via facet ideals: a survey'', \emph{Contemporary Mathematics}, vol.~448, pp.~91--117, 2007.

\bibitem{HaVanTuyl2}
H.~T. H{\`a} and A.~Van Tuyl, ``Monomial ideals, edge ideals of hypergraphs, and
  their graded betti numbers,'' {\em Journal of Algebraic Combinatorics},
  vol.~27, no.~2, pp.~215--245, 2007.

\bibitem{HerzogHibi}
J.~Herzog and T.~Hibi, \textit{Monomial Ideals}, in: GTM 260, Springer, London, 2010.

\bibitem{Katzman}
M.~Katzman, ``Characteristic-independence of betti numbers of graph ideals,''
  {\em Journal of Combinatorial Theory, Series A}, vol.~113, no.~3, pp.~435 --
  454, 2006.

\bibitem{mar2}
M. Morales, A. A. Yazdan Pour and R. Zaare-Nahandi, ``Regularity and Free Resolution of Ideals which are Minimal to $d$-linearity'', \emph{Math. Scand.}, vol.~118, no. 2, pp.~161--182, 2016.

\bibitem{Hochster}
M.~Hochster, \textit{Cohen-Macaulay rings, combinatorics, and simplicial complexes}, in: Ring Theory, II, Proc. Second Conf., Univ. Oklahoma, Norman, Okla., 1975, in: Lecture Notes in Pure and Appl. Math., vol. 26, Dekker, New York, 1977, pp. 171--223.

\bibitem{Herzog-Srinivasan}
J.~Herzog, H.~Srinivasan, ``A note on the subadditivity problem for maximal shifts in free resolutions,'' to appear in \emph{MSRI Proc.}, \href{http://arxiv.org/abs/1303.6214}{\texttt{arXiv:math/1303.6214v1}}

\bibitem{Oscar}
O.~Fern{\'a}ndez-Ramos and P.~Gimenez, ``Regularity $3$ in edge ideals associated to bipartite graphs,'' \emph{Journal of Algebraic Combinatorics}, vol.~39, no.~4, pp. 919--937, 2014. 
  
\bibitem{McCullough}
J.~McCullough, ``A polynomial bound on the regularity of an ideal in terms of half the syzygies,'' \emph{Math. Res. Lett.}, vol.~19, no.~3, pp.~555--565, 2012.

\bibitem{Schenzel}
P.~Schenzel, ``\"{U}ber die freien Aufl\"{o}sungen extremaler Cohen--Macaulay Ringe,'' \emph{Journal of Algebra}, vol.~64, pp.~93--101, 1980.

\bibitem{Stanley}
R.~P.~Stanley, \textit{Combinatorics and Commutative Algebra}, Second Ed., Birkh\"auser, 1996.

\bibitem{ref1}
N.~Terai and T.~Hibi, ``Computation of Betti numbers of monomial ideals
  associated with cyclic polytopes,'' {\em Discrete and Computational Geometry},
  vol.~15, no.~3, pp.~287--295.

\bibitem{Varbaro}
M.~L.~Torrente and M.~Varbaro, ``An alternative algorithm for computing the Betti table of a monomial ideal,'' ArXiv e-prints 2015, \href{http://arxiv.org/abs/1507.01183}{\texttt{arXiv:math/1507.01183}}

\bibitem{Y12}
A.~A.~Yazdan Pour, \textit{Two Results on the Regularity of Monomial Ideals}, in: Proceeding of $10^{\rm th}$ Seminar on Commutative Algebra and Related Topics (In honor of Prof. Hossein Zakeri), 18--19 December  2013, School of Mathematics (IPM), Thehran, IRAN. Available at:\\
\url{math.ipm.ac.ir/conferences/2013/10th\_commalg/SlideShow/YazdanPour.pdf}

\end{thebibliography}
\end{document}